\newtheorem{thm}{Theorem}[section]
\newtheorem{cor}[thm]{Corollary}
\newtheorem{lem}[thm]{Lemma}
\theoremstyle{definition}
\newtheorem{defn}[thm]{Definition}
\theoremstyle{remark}
\numberwithin{equation}{section}
\begin{document}

\title[Invariant Subspaces of $H^2(\mathbb D^n)$]{Two types of invariant Subspaces in the polydisc}%
\author{Beyaz Ba\c{s}ak KOCA}%
\address{Department of Mathematics, Science Faculty, Istanbul University, 34134, Istanbul, Turkey}%
\email{basakoca@istanbul.edu.tr}%

\subjclass[2010]{Primary 47A15; Secondary 47A13}%
\keywords{Invariant subspaces, Hardy space over the polydisc, operator inner function, unitary equivalence}%

\begin{abstract}
It is known that the structure of invariant subspaces of the Hardy
space $H^2(\mathbb D^n)$ on the polydisc $\mathbb{D}^n$ is very complicated; hence,
we need good examples help us to understand the structure of
invariant subspaces of $H^2(\mathbb D^n)$. In this paper, we define
two types of invariant subspaces of $H^2(\mathbb D^n)$. Then, we
give a characterization of these types invariant subspaces in view
of the Beurling-Lax-Halmos Theorem. Unitary equivalence is also
studied in this paper.
\end{abstract}
\maketitle
\section{Introduction}
It is well-known that Beurling \cite{Beurling} showed that every
invariant subspaces $M$ of the Hardy space $H^2(\mathbb D)$ on the
unit disc $\mathbb{D}$ is of the form $M=fH^2(\mathbb D)$ for some
inner function $f$ i.e., is generated by a single inner function
(Beurling-type invariant subspaces). However, in the polydisc case,
the structure of the invariant subspaces cannot be characterized in
such a simple form. Although it is clear that the Beurling-type
subspaces are also invariant, determining all invariant subspaces of
$H^2(\mathbb D^n)$ is difficult. In \cite{Jacewicz}, Jacewicz gave
an example of an invariant subspace of $H^2(\mathbb D^2)$ that can
be generated by two functions and can not be generated by a single
function. Later, Rudin also showed in \cite{Rudin} gave an example
of an invariant subspace of $H^2(\mathbb D^2)$ that is not even
finitely generated. Therefore one may naturally ask for a
classification or an explicit description (in some sense) of all
invariant subspaces of $H^2(\mathbb D^n)$. This question was asked
by Rudin in his book \cite[p.78]{Rudin} and it is still open.
Recently, for $n=2$, two types of important invariant subspaces
known as inner-sequence based invariant subspaces and invariant
subspaces generated by two inner functions have been extensively
studied by various authors in different context(see \cite{Yang,
Seto, Seto-Yang, Qin-Yang, Sarkar, Yixin}). In this paper,  inspired
from these studies, we define two new types of invariant subspaces
of $H^2(\mathbb D^n)$ by considering a larger class of functions
than inner functions. Then, we deal with the structure of these
invariant subspaces in view of the Beurling-Lax-Halmos theorem. Our
method is the same with the work done in \cite{Qin-Yang}. However,
our examples of invariant subspaces and results related to them
improve
and generalize results proved for $n=2$ in \cite{Qin-Yang}.\\

Before starting, we will give preliminary definitions and few
important results that we will use in this study.\\

Let $n$ be a positive integer. The open unit disc in $\mathbb{C}$
is denoted by $\mathbb D$; its boundary is the circle $T$. The polydisc
$\mathbb D^n$ and its distinguished boundary, the torus, $T^n$ are the subsets of $\mathbb{C}^n$
$(n>1)$ which are cartesian products of $n$ copies of $\mathbb D$
and $T$, respectively.

The Hardy space on the polydisc $H^2(\mathbb D^n)$ is defined as
\[H^2(\mathbb{D}^n)=\{f(z): z\in\mathbb D^n, f(z)=\sum_\alpha c(\alpha)z^\alpha,\; \alpha\in \mathbb{Z}_+^n,\; \sum_\alpha |c(\alpha)|^2<\infty \}\]
In fact, $||f||_2=\{\sum_\alpha |c(\alpha)|^2\}^{1/2}$. For $f(z)\in H^2(\mathbb D^n)$, the radial limit
\[f^*(z)=\lim_{r\rightarrow 1}f(rz)\]
exists at almost every $z\in T^n$.

$H^\infty(\mathbb D^n)$ is the space of all
bounded analytic functions in $\mathbb D^n$;
$||f||_\infty=\sup_{z\in\mathbb D^n}|f(z)|$.
An inner function in $\mathbb D^n$ is a function $g\in
H^\infty(\mathbb D^n)$ with $|g^*|=1$ a.e. on $T^n$.

Recall a subspace $M$ of $H^2(\mathbb D^n)$ is called ``invariant'' if (a) $M$ is closed linear subspace of $H^2(\mathbb D^n)$ and (b) $f\in M$ implies $z_if\in M$ for $i=1,\ldots,n$; i.e., multiplication by the variables $z_1,z_2,\ldots,z_n$ maps $M$ into $M$. The smallest invariant subspace of $H^2(\mathbb D^n)$ which contains a given $f$ is denoted by $M_f$ and $M_f$ is called the
subspace generated by $f$ if $M_f=fH^2(\mathbb D^n)$.  For further information for Hardy space on the polydisc, see \cite{Rudin}.

In \cite{Kocainvariant}, the author and Sad\i k completely
characterized the singly-generated invariant subspaces of
$H^2(\mathbb{D}^n)$ as follows:
\begin{thm}{\cite[Theorem 2.1]{Kocainvariant}}\label{koca}
Let $f\in H^\infty(U^n)$. The subspace $f H^2(U^n)$ of $H^2(U^n)$ is invariant if and only if $f$ is a generalized inner function.
\end{thm}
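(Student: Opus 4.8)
The plan is to observe that the whole content of the statement concerns the \emph{closedness} of $fH^2(\mathbb D^n)$, which I will control through the multiplication operator $M_f\colon h\mapsto fh$ on $H^2(\mathbb D^n)$. Indeed, for any $f\in H^\infty(\mathbb D^n)$ the set $fH^2(\mathbb D^n)$ is carried into itself by each coordinate multiplication, because $z_i(fh)=f(z_ih)$ and $z_ih\in H^2(\mathbb D^n)$ whenever $h\in H^2(\mathbb D^n)$; so part (b) of the definition of invariance holds automatically and only part (a), the closedness of $fH^2(\mathbb D^n)=M_f\big(H^2(\mathbb D^n)\big)$, is at issue. Recall that $f$ is called a generalized inner function precisely when $|f^*|$ is bounded away from $0$ a.e.\ on $T^n$; I will therefore prove the equivalent assertion that, for $f\not\equiv0$, the range of $M_f$ is closed if and only if there is $\delta>0$ with $|f^*|\ge\delta$ a.e.\ (the case $f\equiv0$ being trivial). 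I shall use freely that $(fh)^*=f^*h^*$ a.e., so that $\|fh\|_2^2=\int_{T^n}|f^*|^2|h^*|^2\,dm$.

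For the implication that a generalized inner $f$ gives a closed range, the above identity together with $|f^*|\ge\delta$ yields $\|fh\|_2\ge\delta\|h\|_2$ for all $h$, so $M_f$ is bounded below. Since a bounded-below operator is injective with closed range (if $fh_k\to g$, then $(fh_k)$, and hence $(h_k)$, is Cauchy, so $h_k\to h$ and $g=fh$), the space $fH^2(\mathbb D^n)$ is closed and therefore invariant.

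Conversely, assume $fH^2(\mathbb D^n)$ is closed with $f\not\equiv0$. Then $M_f$ is a bounded bijection of $H^2(\mathbb D^n)$ onto the Hilbert space $fH^2(\mathbb D^n)$, so the bounded inverse theorem provides $\delta>0$ with $\|fh\|_2\ge\delta\|h\|_2$ for every $h$. To upgrade this to the pointwise bound $|f^*|\ge\delta$, suppose instead that $E=\{z\in T^n:|f^*(z)|<\delta'\}$ has positive measure for some $\delta'<\delta$. Let $h$ be the outer function on $\mathbb D^n$ with $|h^*|^2=\chi_E+\eps\,\chi_{T^n\setminus E}$; this exists because $\log(\chi_E+\eps\,\chi_{T^n\setminus E})\in L^1(T^n)$, and $h\in H^2(\mathbb D^n)$ since $|h^*|$ is bounded. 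A direct estimate then gives $\|fh\|_2^2/\|h\|_2^2\le(\delta')^2+o(1)$ as $\eps\to0$, which is eventually smaller than $\delta^2$, contradicting the lower bound. Hence $|f^*|\ge\delta$ a.e.\ and $f$ is generalized inner.

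The delicate step---the one I expect to be the main obstacle---is precisely this last passage from the operator inequality $\|fh\|_2\ge\delta\|h\|_2$ to the pointwise estimate $|f^*|\ge\delta$, since the boundary data $h^*$ of an $H^2(\mathbb D^n)$ function cannot be prescribed freely on $T^n$. What rescues the argument is the existence of outer functions on the polydisc with arbitrary boundary modulus: for every $\phi\ge0$ with $\log\phi\in L^1(T^n)$ there is an outer $h$ with $|h^*|=\phi$ a.e., which supplies test functions concentrated on the set where $|f^*|$ is small. Confirming that this multivariable outer function really belongs to $H^2(\mathbb D^n)$ and that the displayed ratio behaves as claimed are the only points requiring genuine several-variable function theory (as in Rudin's book); the remaining Hilbert-space steps are standard.
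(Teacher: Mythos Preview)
The paper does not itself prove this theorem---it is quoted from \cite{Kocainvariant} and used only as a tool---so there is no in-paper argument to compare against; I assess your proposal on its own.

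Your forward direction is correct. The gap lies exactly where you suspected, in the converse: the claim that for every $\phi\ge 0$ on $T^n$ with $\log\phi\in L^1(T^n)$ there exists an outer $h\in H^2(\mathbb D^n)$ with $|h^*|=\phi$ a.e.\ is \emph{false} for $n\ge 2$. If $h$ is zero-free then $\log|h|$ is pluriharmonic on $\mathbb D^n$, so its boundary trace has Fourier coefficients supported on $\mathbb Z_+^n\cup(-\mathbb Z_+^n)$; a function such as $(\log\eps)\,\chi_{T^n\setminus E}$ essentially never meets this constraint for a generic measurable $E$. Rudin's book---the very reference you invoke---is in fact where one learns that free prescription of boundary modulus and inner--outer factorization both break down on the polydisc, so the one-variable outer-function trick cannot be imported.

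The repair is simpler than outer functions. From $\|fh\|_2\ge\delta\|h\|_2$ for all $h\in H^2(\mathbb D^n)$ one obtains $\|f^*p\|_2\ge\delta\|p\|_2$ for every trigonometric polynomial $p$ on $T^n$: choose $N$ so large that $z_1^N\cdots z_n^N\,p$ is an analytic polynomial, and note that multiplication by the unimodular monomial $z_1^N\cdots z_n^N$ is an $L^2(T^n)$-isometry commuting with multiplication by $f^*$. By density, $\|f^*g\|_2\ge\delta\|g\|_2$ for all $g\in L^2(T^n)$; taking $g=\chi_E$ with $E=\{|f^*|<\delta'\}$ and $\delta'<\delta$ forces $m(E)=0$, hence $|f^*|\ge\delta$ a.e.
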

Here a generalized inner function means that $f\in H^\infty(U^n)$
with $f^{-1}\in L^\infty(T^n)$. Here it is clear that
$f^{-1}=1/f^*$. The authors also constructed a singly generated
invariant subspace that can not be Beurling type \cite[Theorem 2.3]{Kocainvariant}.\\

We recall the class of analytic vector valued functions. Let $K$ be
an infinite dimensional separable Hilbert space. Then
\[H^2(K)=\{f(z): z\in\mathbb D, f(z)=\sum_{n=0}^\infty x_nz^n,\; x_n\in K,\; \sum_{n=0}^\infty||x_n||_K^2<\infty \},\]
where $||\cdot||_K$ denotes the norm of the space $K$. Clearly,
$H^2(K)$ is a Hilbert space under the inner product
\[(f\cdot g)= \sum_{n=0}^\infty(x_ny_n), \mbox{ where } f(z)=\sum_{n=0}^\infty x_nz^n \mbox{ and } g(z)=\sum_{n=0}^\infty y_nz^n.\]
Also, $||f||^2_{H^2(K)}=\sum_{n=0}^\infty||x_n||_K^2$.\\

The set of all bounded operator-valued analytic functions
on $\mathbb D$ with values in the algebra $B(K)$ of bounded linear
operators on the space $K$ is defined by
\[H^\infty(B(K))=\{W: W(z)=\sum_{n=0}^\infty A_nz^n,\; z\in\mathbb D,\; A_n\in B(K), \sup_{z\in\mathbb D}||W(z)||_{B(K)}<\infty\}.\]
Also, $||W||_\infty=\sup_{z\in\mathbb D}||W(z)||_{B(K)}$.\\

It is obvious that every element $W\in H^\infty(B(K))$ gives rise to
a bounded linear operator to $H^2(K)$, i.e, to an element $W$, we
correspond an operator $\hat{W}$ on $H^2(K)$ that is defined by the
formula
\[(\hat W\varphi)(z)=W(z)\varphi(z),\; z\in\mathbb D.\]
For more detail on the space of all vector-valued analytic
functions, see \cite{riesz}.\\

A function $W\in H^\infty(B(K))$ is called  operator inner if the pointwise a.e. boundary values are isometries:
\[(W(\xi))^*W(\xi)=I_K \mbox{ for almost all }\xi\in\mathbb{T}.\]

\begin{thm}{\cite[Beurling-Lax-Halmos Theorem]{riesz}}\label{laks}
A subspace $M$ of $H^2(K)$ is invariant under the multiplication
operators by the independent variable if and only if $M$ is of the form $M=\hat
\Theta H^2(K)$ for some operator inner function $\Theta$.
\end{thm}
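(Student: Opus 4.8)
The plan is to prove the two implications separately, with the substantial content lying in the forward (``only if'') direction, which rests on a Wold-type decomposition of the shift restricted to $M$.

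I would dispose of the ``if'' direction first, since it is elementary. Suppose $\Theta$ is operator inner. For $\varphi\in H^2(K)$, the almost-everywhere isometry property of the boundary values gives $\|\hat\Theta\varphi\|^2=\int_{\mathbb T}\|\Theta(\xi)\varphi(\xi)\|_K^2\,d\xi=\int_{\mathbb T}\|\varphi(\xi)\|_K^2\,d\xi=\|\varphi\|^2$, so $\hat\Theta$ is an isometry; in particular its range $\hat\Theta H^2(K)$ is closed. Because pointwise multiplication by $\Theta(z)$ commutes with multiplication by the variable $z$ (both act by $z\,\Theta(z)\varphi(z)=\Theta(z)\,z\varphi(z)$), the range is carried into itself by the shift, hence is invariant.

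For the ``only if'' direction, let $S$ denote multiplication by $z$ on $H^2(K)$; it is a \emph{pure} isometry, meaning $\bigcap_{m\ge0}S^mH^2(K)=\{0\}$, since $S^mH^2(K)$ consists of the functions whose first $m$ Taylor coefficients vanish. As $M$ is closed and $S$-invariant, $S|_M$ is again an isometry. I would set $L=M\ominus SM$, the wandering subspace, and first establish the Wold decomposition $M=\bigoplus_{m=0}^\infty S^mL$, which holds because $S|_M$ inherits purity from $S$. Next I would choose a Hilbert space $E$ with $\dim E=\dim L$ together with a unitary $\tau\colon E\to L$, viewing $E$ as the constant functions inside $H^2(E)$; since $H^2(K)$ is separable and $K$ is infinite-dimensional we have $\dim L\le\dim K$, so $E$ may be realized as a closed subspace of $K$ (and identified with $K$ in the generic case $\dim L=\dim K$, which yields the stated form). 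Comparing the Wold decomposition of $M$ with that of the shift $S_E$ on $H^2(E)$, the assignment $S_E^m e\mapsto S^m\tau e$ extends to a unitary $\hat\Theta\colon H^2(E)\to M$ that intertwines the shifts, $\hat\Theta S_E=S\hat\Theta$, and satisfies $M=\hat\Theta H^2(E)$.

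The crux is then to show that any bounded operator $T\colon H^2(E)\to H^2(K)$ with $TS_E=ST$ is multiplication by a function $\Theta\in H^\infty(B(E,K))$: one defines $\Theta(z)$ through the action of $T$ on constant vectors, uses the intertwining relation to force $T$ to act as genuine pointwise multiplication on polynomials and then by density on all of $H^2(E)$, and derives the bound $\sup_{z\in\mathbb D}\|\Theta(z)\|<\infty$ from the boundedness of $T$ together with analyticity. Applying this to $\hat\Theta$ produces $\Theta\in H^\infty(B(E,K))$, and a boundary-value computation from the isometry property $\|\hat\Theta\varphi\|=\|\varphi\|$ yields $\Theta(\xi)^*\Theta(\xi)=I_E$ a.e.\ on $\mathbb T$, so that $\Theta$ is operator inner and $M=\hat\Theta H^2(E)$. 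I expect the main obstacle to be precisely this identification of a shift-intertwiner with a bounded analytic multiplier, since it is where the analytic (as opposed to purely Hilbert-space) structure of $H^2$ must be exploited; a secondary technical point, which I would flag rather than belabor, is that writing the conclusion literally as $\hat\Theta H^2(K)$ requires matching the multiplicity $\dim L$ with $\dim K$.
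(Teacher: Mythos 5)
Your proposal cannot be checked against a proof in the paper, because the paper does not prove this statement: the Beurling--Lax--Halmos theorem is quoted verbatim from Sz.-Nagy--Foias \cite{riesz} and used as a black box. What you have written is, in outline, the standard proof from that cited source, and it is essentially correct: the easy direction via the observation that an operator inner $\Theta$ makes $\hat\Theta$ an isometry with closed, shift-invariant range; the hard direction via the Wold decomposition $M=\bigoplus_{m\geq 0}S^mL$, $L=M\ominus SM$, where purity of $S|_M$ is inherited from purity of the shift on $H^2(K)$, followed by the identification of the resulting shift-intertwining unitary $H^2(E)\to M$ with multiplication by a function in $H^\infty(B(E,K))$, and a boundary-value argument giving $\Theta(\xi)^*\Theta(\xi)=I_E$ a.e. Two remarks. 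First, the multiplicity issue you flag at the end is not cosmetic: as literally stated in the paper (with $\Theta(\xi)^*\Theta(\xi)=I_K$ and $M=\hat\Theta H^2(K)$), the theorem is false whenever $\dim(M\ominus zM)<\dim K$; for instance $M=\{fe_1: f\in H^2(\mathbb D)\}$ inside $H^2(\ell^2)$ admits no such representation, since an inner $\Theta\in H^\infty(B(K))$ forces the wandering subspace $\hat\Theta K$ of its range to have dimension $\dim K$. The correct formulation (the one in the cited source, and the one your argument actually proves) lets $\Theta$ take values in $B(E,K)$ for an auxiliary space $E$ with $\dim E\leq\dim K$. Second, your boundary-value step silently uses the existence of a.e. strong radial limits for bounded operator-valued analytic functions; this Fatou-type theorem is itself nontrivial and comes from the same source, so in a complete write-up it should be invoked explicitly rather than absorbed into ``a boundary-value computation.''
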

The following property is well-known:
\begin{lem}\label{lemma1}
For $n>1$, $H^2(\mathbb D^n)=H^2(H^2(\mathbb D^{n-1}))$
\end{lem}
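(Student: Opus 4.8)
The plan is to construct an explicit Hilbert space isomorphism by regrouping the power series expansion of an element of $H^2(\mathbb D^n)$ according to the powers of the last variable. Write $z=(z',z_n)$ with $z'=(z_1,\ldots,z_{n-1})\in\mathbb D^{n-1}$, and correspondingly split each multi-index $\alpha=(\alpha',\alpha_n)$ with $\alpha'\in\mathbb Z_+^{n-1}$ and $\alpha_n\in\mathbb Z_+$. Given $f\in H^2(\mathbb D^n)$ with $f(z)=\sum_\alpha c(\alpha)z^\alpha$, I would collect terms of equal $z_n$-degree and set
\[
x_{\alpha_n}(z')=\sum_{\alpha'}c(\alpha',\alpha_n)(z')^{\alpha'},
\]
so that formally $f(z)=\sum_{\alpha_n=0}^\infty x_{\alpha_n}(z')\,z_n^{\alpha_n}$. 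The candidate map is $U\colon f\mapsto\bigl(z_n\mapsto\sum_{\alpha_n}x_{\alpha_n}z_n^{\alpha_n}\bigr)$, taking $H^2(\mathbb D^n)$ into $H^2\bigl(H^2(\mathbb D^{n-1})\bigr)$ with $K=H^2(\mathbb D^{n-1})$.

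The first thing to verify is that $U$ is well defined. For each fixed $\alpha_n$ one has $\norm{x_{\alpha_n}}_{H^2(\mathbb D^{n-1})}^2=\sum_{\alpha'}\abs{c(\alpha',\alpha_n)}^2$, which is finite and shows $x_{\alpha_n}\in H^2(\mathbb D^{n-1})$; summing over $\alpha_n$ then gives
\[
\sum_{\alpha_n=0}^\infty\norm{x_{\alpha_n}}_{H^2(\mathbb D^{n-1})}^2=\sum_{\alpha_n}\sum_{\alpha'}\abs{c(\alpha',\alpha_n)}^2=\sum_{\alpha}\abs{c(\alpha)}^2=\norm{f}_2^2<\infty,
\]
so $Uf$ indeed lies in $H^2\bigl(H^2(\mathbb D^{n-1})\bigr)$ and $\norm{Uf}=\norm{f}_2$. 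The crucial point licensing the middle equality is the unconditional rearrangement of the double series of nonnegative terms, i.e.\ Tonelli's theorem for counting measure, together with the fact that the monomials $(z')^{\alpha'}$ form an orthonormal basis of $H^2(\mathbb D^{n-1})$.

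It then remains to check surjectivity, which I would obtain by running the construction backwards: any $g\in H^2\bigl(H^2(\mathbb D^{n-1})\bigr)$ is $g(z_n)=\sum_{\alpha_n}x_{\alpha_n}z_n^{\alpha_n}$ with each $x_{\alpha_n}=\sum_{\alpha'}c(\alpha',\alpha_n)(z')^{\alpha'}$, and the same computation shows that $\set{c(\alpha)}_\alpha$ is square-summable, so the function $f(z)=\sum_\alpha c(\alpha)z^\alpha$ is a preimage in $H^2(\mathbb D^n)$. Since $U$ is a norm-preserving linear bijection, it is the desired unitary, and by polarization it preserves inner products as well. Conceptually the whole statement is the assertion that the orthonormal basis $\set{z^\alpha}$ of $H^2(\mathbb D^n)$ matches the product basis $\set{(z')^{\alpha'}z_n^{\alpha_n}}$ of $H^2\bigl(H^2(\mathbb D^{n-1})\bigr)$; the only genuine care needed is the justification of interchanging the order of summation, which is routine for nonnegative terms.
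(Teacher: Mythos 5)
Your proof is correct. The paper offers no argument for this lemma at all---it is stated as ``well-known''---so there is nothing to diverge from; your coefficient-regrouping construction of the unitary $U$, with Tonelli's theorem justifying the rearrangement $\sum_{\alpha_n}\sum_{\alpha'}|c(\alpha',\alpha_n)|^2=\sum_\alpha|c(\alpha)|^2$, is precisely the standard proof that the paper implicitly relies on, and your surjectivity argument by reversing the construction completes it. Two minor remarks: the stated ``equality'' of the two spaces is really the canonical unitary identification you exhibit (the elements of $H^2(H^2(\mathbb D^{n-1}))$ are functions of one variable with vector values, not literally functions on $\mathbb D^n$), and note that the paper later uses the identification with either $z_1$ or $z_n$ as the distinguished outer variable (Corollary 1.5 versus Section 2); your argument singles out $z_n$, but by symmetry the identical computation handles any choice of variable.
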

Combining with Lemma \eqref{lemma1} and Theorem \eqref{laks}, we obtain
the full description of invariant subspace of $H^2(\mathbb D^n)$ under
the multiplication operator by the variable $z_1$ as
follows:
\begin{cor}\label{lemmalaks}
A subspace $M$ of $H^2(\mathbb D^n)=H^2(H^2(\mathbb D^{n-1}))$ is
invariant under the multiplication operator by the
variable $z_1$ if and only if $M=\hat \Theta
H^2(H^2(\mathbb D^{n-1}))$ for some operator inner function $\Theta\in H^\infty(B(H^2(\mathbb D^{n-1})))$.
\end{cor}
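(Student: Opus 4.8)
The plan is to reduce the statement entirely to the Beurling--Lax--Halmos Theorem (Theorem \ref{laks}) by making the identification of Lemma \ref{lemma1} explicit at the level of operators. Set $K = H^2(\mathbb D^{n-1})$; this is an infinite-dimensional separable Hilbert space, so Theorem \ref{laks} applies with this choice of coefficient space. By Lemma \ref{lemma1} there is a unitary $U \colon H^2(\mathbb D^n) \to H^2(K)$, and the one fact I need beyond the mere existence of $U$ is that it intertwines multiplication by $z_1$ on $H^2(\mathbb D^n)$ with multiplication by the independent variable on $H^2(K)$.

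To see this, I would write an arbitrary $f \in H^2(\mathbb D^n)$ as $f = \sum_\alpha c(\alpha) z^\alpha$ and group the monomials according to the exponent of $z_1$: writing $\alpha = (k,\beta)$ with $k \in \mathbb Z_+$ and $\beta \in \mathbb Z_+^{n-1}$, set $g_k(z_2,\ldots,z_n) = \sum_\beta c(k,\beta)(z_2,\ldots,z_n)^\beta$. Then $g_k \in H^2(\mathbb D^{n-1}) = K$, the Parseval relation gives $\|f\|_2^2 = \sum_k \|g_k\|_K^2$, and $Uf = \sum_k g_k z^k$ is precisely the element of $H^2(K)$ assigned to $f$ by Lemma \ref{lemma1}. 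Multiplying $f$ by $z_1$ shifts the $z_1$-grading by one, so $U(z_1 f) = \sum_k g_k z^{k+1} = z\cdot(Uf)$, which is exactly the action of multiplication by the independent variable on $H^2(K)$. Thus $U$ carries the multiplication operator by $z_1$ to the shift on $H^2(K)$.

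With the intertwining in hand the conclusion is immediate. A closed subspace $M \subseteq H^2(\mathbb D^n)$ is invariant under multiplication by $z_1$ if and only if $UM$ is a closed subspace of $H^2(K)$ invariant under multiplication by the independent variable. By Theorem \ref{laks} this holds if and only if $UM = \hat\Theta H^2(K)$ for some operator inner function $\Theta \in H^\infty(B(K))$; applying $U^{-1}$ and recalling $K = H^2(\mathbb D^{n-1})$ yields $M = \hat\Theta H^2(H^2(\mathbb D^{n-1}))$, as claimed. (Strictly one identifies $M$ with $UM$ throughout, since the statement of the corollary already writes $H^2(\mathbb D^n) = H^2(H^2(\mathbb D^{n-1}))$.)

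The only real work is the intertwining step, and even that is routine once the grouping of monomials is written down; the substance of the corollary lies in recognising that Lemma \ref{lemma1} is not merely a Hilbert-space isomorphism but one that respects the $z_1$-shift structure. The point to be careful about is that the unitary of Lemma \ref{lemma1} must be the one induced by grouping by powers of $z_1$ (rather than by one of the other variables), since it is multiplication by $z_1$ --- and not by $z_2,\ldots,z_n$ --- that is turned into the shift on the outer factor $H^2(K)$.
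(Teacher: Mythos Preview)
Your argument is correct and follows exactly the approach the paper intends: the paper gives no proof beyond the sentence ``Combining with Lemma~\ref{lemma1} and Theorem~\ref{laks}, we obtain\ldots'', and you have simply made explicit the routine verification that the identification $H^2(\mathbb D^n)\cong H^2(H^2(\mathbb D^{n-1}))$ intertwines multiplication by $z_1$ with the shift on the outer factor. Your remark that the grouping must be by powers of $z_1$ (and not one of the other variables) is apt, and is the only point where any care is needed.
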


\section{Sequence-Based Invariant Subspaces}
\begin{defn}\label{defn}
An invariant subspace $M$ of $H^2(\mathbb D^n)$ is called
sequence-based if it is of the form
\begin{equation}\label{ex1}
M=\bigoplus_{l\geq 0}
f_l(z_1,\ldots,z_{n-1})H^2(\mathbb{D}^{n-1})z_n^l,
\end{equation}
where the sequence $\{f_l\}_{l\geq 0}$ consists of functions having
the following properties:
\begin{itemize}
\item[(I)] $f_l\in H^\infty(\mathbb D^{n-1})$ with $f_l^{-1}\in L^\infty(\mathbb T^{n-1})$ for any $l$,
\item[(II)] $f_l$ is divisible by $f_{l+1}$ for any $l$, i.e.,
every $(f_l/f_{l+1})$ satisfies the condition (I).
\end{itemize}
\end{defn}
It is clear that $M$ is invariant under the multiplication
by the variables $z_1,\ldots,z_{n-1}$. Moreover, note
that the condition (II) is equivalent to $f_lH^2(\mathbb D^{n-1})$
is contained in $f_{l+1}H^2(\mathbb D^{n-1})$. From this, we have
\[z_nf_lH^2(\mathbb D^{n-1})z_n^l=f_lH^2(\mathbb D^{n-1})z_n^{l+1}\subset f_{l+1}H^2(\mathbb D^{n-1})z_n^{l+1}.\]
This shows that $M$ is also invariant under the multiplication by $z_n$.\\

Remark that inner functions is properly contained in the class of
all functions $f\in H^\infty(\mathbb{D}^n)$ with $f^{-1}\in
L^\infty(T^n)$ for any $n>1$. 
In the case of $n=2$, the inner sequence based invariant subspaces are studied. The characterization of
this type of invariant subspaces is studied by Qin and Yang in
\cite[Theorem 2.1]{Qin-Yang}. We now give the following
characterization of
sequence- based invariant subspaces as same manner of their proof.\\

%

Since the subspace $M=\bigoplus_{l\geq 0}
f_l(z_1,\ldots,z_{n-1})H^2(\mathbb D^{n-1})z_n^l$ is invariant,
there exists an operator inner function $\Theta(z_n)\in
H^\infty(B(H^2(\mathbb D^{n-1})))$ by the Beurling-Lax-Halmos
theorem. Assume its power series representation
\[\Theta(z_n)=\sum_{l\geq} P_lz_n^l,\]
where $z_n\in \mathbb D$ and $P_l$ are operators on $H^2(\mathbb
D^{n-1})$.

\begin{thm}
Let $\Theta(z_n)=\sum_{l\geq0} P_lz_n^l$ be the operator inner
function for an invariant subspace $M$ of $H^2(\mathbb D^n)$. Then
$M$ is sequence-based if and only if $P_l$, $l\geq0$, are orthogonal
projections on $H^2(\mathbb D^{n-1})$ with perpendicular ranges such
that $\bigoplus_{l=0}^kP_lH^2(\mathbb D^{n-1})$, $k\geq0$, is
generated by a single function.
\end{thm}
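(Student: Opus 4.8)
The plan is to prove both directions by comparing the given orthogonal decomposition of $M$ with the Beurling--Lax--Halmos representation $M = \hat{\Theta} H^2(H^2(\mathbb{D}^{n-1}))$, and to read off the operator coefficients $P_l$ from the way $\Theta$ must act on constants. The key structural observation I would exploit is that the summand $f_l(z_1,\dots,z_{n-1}) H^2(\mathbb{D}^{n-1}) z_n^l$ in \eqref{ex1} is, up to the factor $z_n^l$, exactly a Beurling-type (singly generated) invariant subspace of $H^2(\mathbb{D}^{n-1})$ in the sense of Theorem \ref{koca}, and that the orthogonal projection of $H^2(\mathbb{D}^{n-1})$ onto the closed subspace $f_l H^2(\mathbb{D}^{n-1})$ is the natural candidate for $P_l$.

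For the forward direction, assume $M$ is sequence-based. First I would compute $\Theta(z_n)$ applied to a constant vector $x \in H^2(\mathbb{D}^{n-1})$ (viewed as the element of $H^2(H^2(\mathbb{D}^{n-1}))$ that is constant in $z_n$); since $\hat{\Theta}$ is an isometry whose range is $M$, the coefficients $P_l x$ of its power series must be built from the projections onto the successive pieces of the decomposition \eqref{ex1}. The nesting coming from condition (II), namely $f_l H^2(\mathbb{D}^{n-1}) \supseteq f_{l+1} H^2(\mathbb{D}^{n-1})$, forces the ranges $P_l H^2(\mathbb{D}^{n-1})$ to fit together so that $\bigoplus_{l=0}^{k} P_l H^2(\mathbb{D}^{n-1})$ equals $f_0 H^2(\mathbb{D}^{n-1})$ (or, after accounting for the telescoping, the span associated with the finitely many top coefficients), which by Theorem \ref{koca} is singly generated. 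The isometry condition $\Theta(\xi)^* \Theta(\xi) = I$ a.e., together with the fact that the summands in \eqref{ex1} are mutually orthogonal, is what yields that the $P_l$ are orthogonal projections with pairwise perpendicular ranges: orthogonality of the $z_n^l$-graded pieces translates into $P_l^* P_m = 0$ for $l \neq m$, and idempotency/self-adjointness of each $P_l$ follows from the isometry identity evaluated coefficientwise.

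For the converse, assume the $P_l$ are orthogonal projections with perpendicular ranges and each partial sum $\bigoplus_{l=0}^{k} P_l H^2(\mathbb{D}^{n-1})$ is singly generated, say by $g_k$. By Theorem \ref{koca}, each $g_k$ is a generalized inner function, i.e. $g_k \in H^\infty(\mathbb{D}^{n-1})$ with $g_k^{-1} \in L^\infty(\mathbb{T}^{n-1})$. Setting $f_l$ to be the generator of the complementary piece associated with the tail (concretely, $f_l$ generates $\bigoplus_{j \geq l} P_j H^2(\mathbb{D}^{n-1})$, so that $f_0 = g_\infty$ and the increments recover the individual $P_l$), I would verify that the perpendicular-range hypothesis gives the nesting $f_l H^2(\mathbb{D}^{n-1}) \supseteq f_{l+1} H^2(\mathbb{D}^{n-1})$, which is condition (II), and that single-generation gives condition (I) via Theorem \ref{koca}. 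Reassembling the graded pieces with the $z_n^l$ factors then reproduces the decomposition \eqref{ex1}, and one checks that the operator inner function with these coefficients generates exactly $M$.

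The main obstacle I anticipate is the bookkeeping that connects the \emph{cumulative} subspaces $\bigoplus_{l=0}^{k} P_l H^2(\mathbb{D}^{n-1})$ to the \emph{individual} generators $f_l$ in \eqref{ex1}: the decomposition \eqref{ex1} is indexed by the grading in $z_n$ with a decreasing sequence of subspaces $f_l H^2(\mathbb{D}^{n-1})$, whereas the projections $P_l$ naturally produce an increasing chain of partial sums. Matching these requires passing between the tail spaces $\bigoplus_{j \geq l}$ and the head spaces $\bigoplus_{j \leq k}$ carefully, and verifying that single-generation of the partial sums is the correct finiteness hypothesis (rather than single-generation of each $P_l H^2(\mathbb{D}^{n-1})$ separately). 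The delicate point is ensuring that the generator $f_l$ of a nested intersection or union of generalized-inner-generated subspaces is again generalized inner, for which Theorem \ref{koca} is exactly the tool, applied to the singly generated partial sums and to their quotients $f_l / f_{l+1}$.
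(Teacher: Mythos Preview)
Your proposal has the nesting direction backwards, and this propagates into a genuine gap in the converse argument. Condition (II) in Definition~\ref{defn} says $f_l$ is \emph{divisible by} $f_{l+1}$, i.e.\ $f_l/f_{l+1}\in H^\infty(\mathbb{D}^{n-1})$, and the paper remarks explicitly that this is equivalent to $f_lH^2(\mathbb{D}^{n-1})\subseteq f_{l+1}H^2(\mathbb{D}^{n-1})$. So the chain of subspaces $f_lH^2(\mathbb{D}^{n-1})$ is \emph{increasing} in $l$, not decreasing as you assume throughout. With the correct orientation, the operator coefficients are $P_0=$ projection onto $f_0H^2(\mathbb{D}^{n-1})$ and $P_l=$ projection onto $f_lH^2(\mathbb{D}^{n-1})\ominus f_{l-1}H^2(\mathbb{D}^{n-1})$ for $l\geq 1$; the telescoping sum then gives $\bigoplus_{l=0}^{k}P_lH^2(\mathbb{D}^{n-1})=f_kH^2(\mathbb{D}^{n-1})$ directly. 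There is no head/tail mismatch to negotiate: the ``obstacle'' you anticipate is an artifact of the reversed inclusion.

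This matters concretely in your converse. You propose to let $f_l$ generate the \emph{tail} $\bigoplus_{j\geq l}P_jH^2(\mathbb{D}^{n-1})$, but the hypothesis only tells you that the \emph{heads} $\bigoplus_{l=0}^{k}P_lH^2(\mathbb{D}^{n-1})$ are singly generated; nothing guarantees the tails are. The correct move is simply to set $f_k$ equal to the generator $g_k$ of the $k$-th head, observe that the heads increase so $f_kH^2(\mathbb{D}^{n-1})\subseteq f_{k+1}H^2(\mathbb{D}^{n-1})$ (giving condition (II)), and then expand $\Theta(z_n)H^2(\mathbb{D}^n)$ as $\sum_{k\geq 0} z_n^k\bigl(\bigoplus_{l=0}^{k}P_lH^2(\mathbb{D}^{n-1})\bigr)=\bigoplus_{k\geq 0} f_kH^2(\mathbb{D}^{n-1})z_n^k$. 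For the forward direction, the clean way to identify the $P_l$ is to compute $M\ominus z_nM$ explicitly rather than appeal vaguely to ``$\Theta$ applied to constants''; this wandering subspace is exactly $f_0H^2(\mathbb{D}^{n-1})\oplus\bigoplus_{l\geq 1}z_n^l\bigl(f_lH^2(\mathbb{D}^{n-1})\ominus f_{l-1}H^2(\mathbb{D}^{n-1})\bigr)$, from which the $P_l$ and the isometry property of $\Theta$ can be read off immediately.
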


\begin{proof}
Suppose that $M=\bigoplus_{l\geq
0}f_l(z_1,\ldots,z_{n-1})H^2(\mathbb D^{n-1})z_n^l$. It is easy
compute that
\[M\ominus z_nM=f_0H^2(\mathbb D^{n-1})\oplus\bigoplus_{l\geq 1} z_n^l(f_lH^2(\mathbb D^{n-1})\ominus f_{l-1}H^2(\mathbb D^{n-1})).\]
For simplicity we let $N_l=f_lH^2(\mathbb D^{n-1})\ominus
f_{l-1}H^2(\mathbb D^{n-1})$, $l\geq 1$. Let $P_l$ be the orthogonal
projection from $H^2(\mathbb D^{n-1})$ onto $N_l$, $l\geq 1$, $P_0$
be the orthogonal projection from $H^2(\mathbb D^{n-1})$ onto
$f_0H^2(\mathbb D^{n-1})$, and set
$\Theta(z_n)=\sum_{l\geq0} P_lz_n^l$.\\

For any $g(z_1,\ldots,z_{n-1})\in H^2(\mathbb D^{n-1})$,
$\Theta(z_n)g=\sum_{l\geq0}z_n^lP_lg$ and
\[||\Theta(z_n)g||^2=||\sum_{l\geq0}z_n^lP_lg||^2=\sum_{l\geq0}||z_n^lP_lg||^2=||g||^2\]
This equality shows that $\Theta(z_n)$ is an operator inner
function.
\begin{equation*}
\begin{split}
M\ominus z_nM &=f_0H^2(\mathbb D^{n-1})\oplus\bigoplus_{l\geq 1}z_n^lN_l\\
&=P_0H^2(\mathbb D^{n-1})\oplus\bigoplus_{l\geq 1}z_n^lP_lH^2(\mathbb D^{n-1})\\
&=\Theta(z_n)H^2(\mathbb D^{n-1}).
\end{split}
\end{equation*}
By this, we have
\begin{equation*}
\begin{split}
M &=\bigoplus_{l\geq 0}z_n^l(M\ominus z_nM)=\bigoplus_{l\geq 0}z_n^l\Theta(z_n)H^2(\mathbb D^{n-1})\\
&=\Theta(z_n)\bigoplus_{l\geq 0}z_n^lH^2(\mathbb
D^{n-1})=\Theta(z_n)H^2(\mathbb D^{n}).
\end{split}
\end{equation*}
Conversely, suppose $P_l$, $l\geq0$ are orthogonal projections on
$H^2(\mathbb D^{n-1})$ with perpendicular ranges such that
$\bigoplus_{l=0}^kP_lH^2(\mathbb D^{n-1})$, $k\geq0$, is generated
by a single function and $\Theta(z_n)=\sum_{l\geq0}z_n^lP_l$. Then
\begin{equation}\label{eq1}
M=\Theta(z_n)H^2(\mathbb
D^n)=\sum_{l\geq0}z_n^lP_l\left(\bigoplus_{t\geq 0} z_n^tH^2(\mathbb
D^{n-1})\right)=\sum_{k=0}^\infty z_n^k\left(\bigoplus_{l=0}^k
P_lH^2(\mathbb D^{n-1})\right)
\end{equation}
Since $z_iM\subseteq M$, $i=1,\ldots,n-1$, each closed subspace
$\bigoplus_{l=0}^kP_lH^2(\mathbb D^{n-1})$, $k\geq0$ is an invariant
subspace and by assumption $\bigoplus_{l=0}^kP_lH^2(\mathbb
D^{n-1})$, $k\geq0$ is an invariant subspace generated by a single
function. By Theorem \eqref{koca} there exists a function $f_k\in
H^\infty(\mathbb D^{n-1})$ with $f_k^{-1}\in L^\infty(\mathbb
T^{n-1})$ such that
\[\bigoplus_{l=0}^kP_lH^2(\mathbb D^{n-1})=f_k H^2(\mathbb D^{n-1}).\]
Clearly, $f_k$ is divisible by $f_{k+1}$ for any $k$. Hence
$\{f_k\}$ satisfies the conditions (I) and (II) in Definition
\eqref{defn}, and by \eqref{eq1} we have
\[M=\bigoplus_{k\geq 0} f_kH^2(\mathbb D^{n-1})z_n^k,\]
that is $M$ is sequence-based invariant subspace.
\end{proof}

\section{Invariant Subspaces Generated by Two Functions}
In this section we deal with the invariant subspace $M$ of the form
\begin{equation}\label{ex2}
M= f_1(z_1)H^2(\mathbb D^n)+f_2(z_2,\ldots,z_n)H^2(\mathbb D^n),
\end{equation}
where $f_1(z_1)\in H^\infty(\mathbb D)$ with $f_1^{-1}\in
L^\infty(\mathbb T)$ and $f_2(z_2,\ldots,z_n)\in H^\infty(\mathbb
D^{n-1})$ with $f_2^{-1}\in L^\infty(\mathbb T^{n-1})$. 
To see invariance of $M$ it is enough to show that $M$ is closed as in
\cite[Lemma 2.4]{Izuchi}. In fact, since
\[H^2(\mathbb{D}^n)\ominus
f_2H^2(\mathbb{D}^n)=\bigoplus_{l\geq0}z_l(H^2(\mathbb{D}^{n-1})\ominus
f_2H^2(\mathbb{D}^{n-1})),\] $H^2(\mathbb{D}^n)\ominus
f_2H^2(\mathbb{D}^n)$ is invariant under the multiplication operator
by $z_1$. Then $f_1(z_1)(H^2(\mathbb{D}^n)\ominus
f_2H^2(\mathbb{D}^n))\perp f_2H^2(\mathbb{D}^n)$ and
\begin{equation*}
\begin{split}
M &= f_1H^2(\mathbb D^n)+f_2H^2(\mathbb D^n)\\
&=f_1(H^2(\mathbb{D}^n)\ominus f_2H^2(\mathbb{D}^n))\oplus
f_2H^2(\mathbb{D}^n) +f_2H^2(\mathbb{D}^n)\\
&= f_1(H^2(\mathbb{D}^n)\ominus f_2H^2(\mathbb{D}^n))\oplus
f_2H^2(\mathbb{D}^n).
\end{split}
\end{equation*}
Hence $M$ is closed. \\

For $n=2$, the case of $f_1$ and $f_2$ are inner was studied in \cite{Izuchi,Yang, Qin-Yang}. Following their method
in \cite{Qin-Yang}, we characterize $M$ of the form \eqref{ex2} in
terms of $\Theta(z_1)$ corresponding to $M$.\\

Before starting, we need the following lemma:
\begin{lem}\label{lemma}
If $M$ is an invariant subspace of the form \eqref{ex2}, then
\[M\ominus z_1M=f_1(H^2(\mathbb D^{n-1})\ominus f_2H^2(\mathbb D^{n-1}))\oplus f_2H^2(\mathbb D^{n-1}).\]
\end{lem}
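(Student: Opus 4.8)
The plan is to read off $M\ominus z_1M$ from the orthogonal decomposition
\[
M = f_1\bigl(H^2(\mathbb{D}^n)\ominus f_2H^2(\mathbb{D}^n)\bigr)\oplus f_2H^2(\mathbb{D}^n)
\]
established just above, by treating the two summands separately. Write $A=H^2(\mathbb{D}^n)\ominus f_2H^2(\mathbb{D}^n)$ and $E=f_2H^2(\mathbb{D}^n)$, so that $M=f_1A\oplus E$. The first thing I would record is that both summands are invariant under multiplication by $z_1$: since $f_2$ is free of the variable $z_1$, the product structure $H^2(\mathbb{D}^n)=\bigoplus_{l\ge0}z_1^lH^2(\mathbb{D}^{n-1})$ gives $E=\bigoplus_{l\ge0}z_1^lf_2H^2(\mathbb{D}^{n-1})$ and hence $A=\bigoplus_{l\ge0}z_1^l\bigl(H^2(\mathbb{D}^{n-1})\ominus f_2H^2(\mathbb{D}^{n-1})\bigr)$, both of which are manifestly $z_1$-invariant, and multiplication by $f_1$ commutes with $z_1$.

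Because $z_1$ is an isometry that commutes with multiplication by $f_1$ and $f_2$, applying it to the decomposition yields $z_1M=f_1z_1A\oplus z_1E$, again an orthogonal sum, with $f_1z_1A\subseteq f_1A$ and $z_1E\subseteq E$. As the two pieces of $z_1M$ sit inside the corresponding orthogonal summands of $M$, the wandering subspace splits accordingly:
\[
M\ominus z_1M=\bigl(f_1A\ominus f_1z_1A\bigr)\oplus\bigl(E\ominus z_1E\bigr).
\]
The second summand is immediate from the $z_1$-graded description of $E$: removing $z_1E=\bigoplus_{l\ge1}z_1^lf_2H^2(\mathbb{D}^{n-1})$ leaves exactly $E\ominus z_1E=f_2H^2(\mathbb{D}^{n-1})$, which is the second term claimed.

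It remains to identify the first summand as $f_1\bigl(H^2(\mathbb{D}^{n-1})\ominus f_2H^2(\mathbb{D}^{n-1})\bigr)$, and this is the step I expect to carry the whole argument. Setting $W=H^2(\mathbb{D}^{n-1})\ominus f_2H^2(\mathbb{D}^{n-1})$, the graded form of $A$ gives the clean orthogonal splitting $A=W\oplus z_1A$, so I would push $f_1$ through it and try to conclude $f_1A=f_1W\oplus f_1z_1A$ with $f_1W\perp f_1z_1A$; that gives $f_1A\ominus f_1z_1A=f_1W$ and finishes the proof. The delicate point, and the main obstacle, is precisely that multiplication by $f_1$ must \emph{preserve} this orthogonality when passing from $A=W\oplus z_1A$ to its image, turning the formal identity $f_1(W\oplus z_1A)=f_1W\oplus f_1z_1A$ into an honest orthogonal decomposition. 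This hinges on multiplication by $f_1$ acting isometrically on $H^2(\mathbb{D}^n)$, which amounts to $|f_1^*|=1$ a.e.\ on $\mathbb{T}$; I would therefore isolate and establish this isometry/orthogonality statement first, and only then assemble the two summands to obtain the stated formula.
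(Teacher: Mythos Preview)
Your approach is essentially the paper's: both arguments write $M$ as an orthogonal sum of two $z_1$-invariant pieces and then read off the wandering subspace summand by summand. Your decomposition $M=f_1A\oplus E$ with $A=\bigoplus_{l\ge0}z_1^lW$ and $E=\bigoplus_{l\ge0}z_1^lf_2H^2(\mathbb{D}^{n-1})$ is exactly the paper's
\[
M=f_1H^2(\mathbb D)\otimes W\;\oplus\;H^2(\mathbb D)\otimes f_2H^2(\mathbb D^{n-1}),
\]
which the paper reaches by expanding the four-fold tensor splitting of $H^2(\mathbb D^n)=H^2(\mathbb D)\otimes H^2(\mathbb D^{n-1})$; you instead start from the orthogonal decomposition already recorded just before the Lemma and unfold the $z_1$-grading. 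From there both proofs compute $M\ominus z_1M$ componentwise, and the identifications $E\ominus z_1E=f_2H^2(\mathbb D^{n-1})$ and $f_1A\ominus z_1f_1A=f_1W$ are the same in both.

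One caution on the step you flag. You are right that the identity $f_1A\ominus f_1z_1A=f_1W$ needs multiplication by $f_1$ to be an isometry, i.e.\ $|f_1^\ast|=1$ a.e.\ on $\mathbb T$. But this does \emph{not} follow from the hypothesis ``$f_1\in H^\infty(\mathbb D)$ with $f_1^{-1}\in L^\infty(\mathbb T)$'' stated before the Lemma (take $f_1(z_1)=2+z_1$: then $f_1H^2(\mathbb D)=H^2(\mathbb D)$ and the Lemma's right-hand side is visibly wrong). So do not try to ``establish'' innerness of $f_1$ from the given data; treat it as an additional hypothesis. The paper itself tacitly uses it here and makes it explicit in the surrounding results (the proof of Theorem~\ref{thm2} needs $|f_1|=1$ on $\mathbb T$ for $\Theta$ to be operator inner, and Theorem~4.2 assumes $f_1$ inner outright). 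With that assumption in place, your argument and the paper's are the same proof.
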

\begin{proof}
We prove the lemma as in \cite[Lemma 3.2]{Yang}. Given a pair of
functions $f_1(z_1)$ and $f_2(z_2,\ldots,z_n)$ corresponding to $M$,
we can decompose $H^2(\mathbb D^n)$ as

\[\left((H^2(\mathbb D)\ominus f_1H^2(\mathbb D))\oplus f_1H^2(\mathbb D)\right)\otimes
\left((H^2(\mathbb D^{n-1})\ominus f_2H^2(\mathbb D^{n-1}))\oplus
f_2H^2(\mathbb D^{n-1})\right)\]\[=(H^2(\mathbb D)\ominus
f_1H^2(\mathbb D))\otimes(H^2(\mathbb D^{n-1})\ominus f_2H^2(\mathbb
D^{n-1}))\oplus (H^2(\mathbb D)\ominus f_1H^2(\mathbb D))\otimes
f_2H^2(\mathbb D^{n-1})\]\[ \oplus f_1H^2(\mathbb
D)\otimes(H^2(\mathbb D^{n-1})\ominus f_2H^2(\mathbb D^{n-1}))\oplus
f_1H^2(\mathbb D)\otimes f_2H^2(\mathbb D^{n-1}).\]

By this equality we have
\begin{equation*}
(H^2(\mathbb D)\ominus f_1H^2(\mathbb D))\otimes(H^2(\mathbb
D^{n-1})\ominus f_2H^2(\mathbb D^{n-1}))=H^2(\mathbb D^n)\ominus
(f_1H^2(\mathbb D)+f_2H^2(\mathbb D^{n-1})).
\end{equation*}
Relative to the decomposition
\begin{equation*}
\begin{split}
M &=f_1H^2(\mathbb D)+f_2H^2(\mathbb D^{n-1})\\
&= (H^2(\mathbb D)\ominus f_1H^2(\mathbb D))\otimes f_2H^2(\mathbb
D^{n-1})\oplus f_1H^2(\mathbb D)\otimes (H^2(\mathbb D^{n-1})\ominus
f_2H^2(\mathbb D^{n-1}))\\
& \oplus f_1H^2(\mathbb D)\otimes f_2H^2(\mathbb D^{n-1}).
\end{split}
\end{equation*}
Based on this equality, we can write $M$ as
\[M=f_1H^2(\mathbb D)\otimes (H^2(\mathbb D^{n-1})\ominus f_2H^2(\mathbb
D^{n-1}))\oplus H^2(\mathbb D)\otimes f_2H^2(\mathbb D^{n-1})\] and
the lemma follows easily.
\end{proof}

\begin{thm}\label{thm2}
Let $\Theta(z_1)$ be the operator inner function for an invariant
subspace $M$. Then $M$ is of the form \eqref{ex2} if and only if
$\Theta(z_1)=f_1(z_1)P_0+P_1$, where $P_1$ is a projection from
$H^2(\mathbb D^{n-1})$ to an invariant subspace generated by a
single function and $P_0$ is a complemented projection of $P_1$,
i.e., $P_0P_1=0$, $P_0+P_1=I$.
\end{thm}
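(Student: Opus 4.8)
The plan is to exploit the standard correspondence between a $z_1$-invariant subspace and its wandering subspace, combined with Lemma \ref{lemma}. Writing $K=H^2(\mathbb D^{n-1})$ and viewing $H^2(\mathbb D^n)=H^2(K)$ as in Lemma \ref{lemma1}, I first record the bridge between the two descriptions of $M$. If $M=\hat\Theta H^2(K)$ for an operator inner $\Theta(z_1)$, then $\hat\Theta$ is an isometry commuting with multiplication by $z_1$, so $z_1M=\hat\Theta(z_1H^2(K))$ and hence $M\ominus z_1M=\hat\Theta(K)=\{z_1\mapsto\Theta(z_1)g:g\in K\}$, the image of the constant functions. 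Everything will be read off from this identity.

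For the direction assuming $\Theta(z_1)=f_1(z_1)P_0+P_1$, Theorem \ref{koca} identifies the range of $P_1$, a singly generated invariant subspace of $H^2(\mathbb D^{n-1})$, as $f_2H^2(\mathbb D^{n-1})$ for some $f_2$ with $f_2^{-1}\in L^\infty(\mathbb T^{n-1})$; since $P_0=I-P_1$, the range of $P_0$ is $H^2(\mathbb D^{n-1})\ominus f_2H^2(\mathbb D^{n-1})$. For $g\in K$ we have $\Theta(z_1)g=f_1(z_1)(P_0g)+P_1g$, so $\hat\Theta(K)=f_1\bigl(H^2(\mathbb D^{n-1})\ominus f_2H^2(\mathbb D^{n-1})\bigr)\oplus f_2H^2(\mathbb D^{n-1})$, which is precisely the wandering subspace produced by Lemma \ref{lemma}. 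Feeding this through $M=\bigoplus_{l\geq0}z_1^l(M\ominus z_1M)$ and using the decomposition $H^2(\mathbb D^n)\ominus f_2H^2(\mathbb D^n)=\bigoplus_{l\geq0}z_1^l(H^2(\mathbb D^{n-1})\ominus f_2H^2(\mathbb D^{n-1}))$ displayed before Lemma \ref{lemma}, one collects the two families of terms into $f_1\bigl(H^2(\mathbb D^n)\ominus f_2H^2(\mathbb D^n)\bigr)\oplus f_2H^2(\mathbb D^n)=f_1H^2(\mathbb D^n)+f_2H^2(\mathbb D^n)$, which is \eqref{ex2}.

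Conversely, if $M$ has the form \eqref{ex2}, Lemma \ref{lemma} computes $M\ominus z_1M$ explicitly. I would then let $P_1$ be the orthogonal projection of $H^2(\mathbb D^{n-1})$ onto the singly generated subspace $f_2H^2(\mathbb D^{n-1})$, put $P_0=I-P_1$, and set $\Theta'(z_1)=f_1(z_1)P_0+P_1$. The computation of the previous paragraph shows $\hat{\Theta'}H^2(K)=M$, so $\Theta'$ is an operator inner function representing $M$; since the Beurling-Lax-Halmos representation in Corollary \ref{lemmalaks} is unique up to a constant unitary factor on the right, $\Theta$ has the asserted form, with $P_1$ projecting onto a singly generated invariant subspace and $P_0P_1=0$, $P_0+P_1=I$.

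The step needing the most care is verifying that $\Theta'=f_1P_0+P_1$ is genuinely operator inner, i.e. $\Theta'(\xi)^*\Theta'(\xi)=I_K$ for a.e.\ $\xi\in\mathbb T$. Using $P_0^*=P_0$, $P_1^*=P_1$, $P_0P_1=P_1P_0=0$ and $P_0+P_1=I$, one finds $\Theta'(\xi)^*\Theta'(\xi)=\abs{f_1(\xi)}^2P_0+P_1$, so isometry on the boundary hinges on $\abs{f_1^*}=1$ a.e. This is the delicate point in passing from the inner setting of \cite{Qin-Yang} to the generalized-inner setting here, and is where I expect the argument to demand attention: either one works with the inner part $u_1$ of $f_1$ (noting $f_1H^2(\mathbb D)=u_1H^2(\mathbb D)$ when $f_1^{-1}\in L^\infty$, so the subspace is unchanged), or one interprets ``operator inner'' in the sense making $\hat{\Theta'}$ an isometry onto $M$. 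The remaining bookkeeping, identifying $f_2H^2(\mathbb D^n)$ with the $z_1$-expansion $\bigoplus_{l\geq0}z_1^l\,f_2H^2(\mathbb D^{n-1})$ and matching the orthogonal decompositions, is routine once the wandering-subspace identity is in hand.
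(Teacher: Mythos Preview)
Your approach is essentially the paper's: use Lemma \ref{lemma} to identify $M\ominus z_1M$, build $\Theta'=f_1P_0+P_1$ from the two orthogonal pieces, and verify that $\hat{\Theta'}H^2(K)=M$. Your converse is in fact more streamlined than the paper's. The paper interposes a $2\times2$ block decomposition of $T_{z_i}$ ($i\ge2$) relative to the splitting $f_1M_0\oplus M_1$ and uses the invariance of $M$ to deduce $P_0z_iP_1H^2(\mathbb D^{n-1})=\{0\}$, hence that $P_1H^2(\mathbb D^{n-1})$ is $z_i$-invariant---a conclusion already contained in the hypothesis on $P_1$; you bypass this and go straight to the computation. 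Your observation about the isometry check is on point: $\Theta'(\xi)^*\Theta'(\xi)=\abs{f_1(\xi)}^2P_0+P_1$ equals $I_K$ precisely when $\abs{f_1^*}=1$ a.e., and the paper's line $\norm{\Theta(z_1)h}^2=\abs{f_1(z_1)}^2\norm{P_0h}^2+\norm{P_1h}^2=\norm{h}^2$ uses this without comment (and in the converse the paper explicitly writes ``where $f_1$ is inner''). Your proposed remedy---replace $f_1$ by its inner factor, which leaves $f_1H^2(\mathbb D)$ and hence $M$ unchanged because in one variable $f_1^{-1}\in L^\infty(\mathbb T)$ forces the outer part to be invertible in $H^\infty$---is exactly what is needed. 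The appeal to uniqueness in Beurling--Lax--Halmos is harmless but not required: the paper simply takes $\Theta$ to be the operator inner function determined by the wandering subspace $M\ominus z_1M$.
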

\begin{proof}
Suppose that $M$ is of the form \eqref{ex2}. By Lemma \eqref{lemma}
we have
\[M\ominus z_1M=f_1(H^2(\mathbb D^{n-1})\ominus f_2H^2(\mathbb D^{n-1}))\oplus f_2H^2(\mathbb D^{n-1}).\]
Set $\Theta(z_1)=f_1P_0+P_1$, where $P_0:H^2(\mathbb
D^{n-1})\rightarrow H^2(\mathbb D^{n-1})\ominus f_2H^2(\mathbb
D^{n-1})$ is the orthogonal projection and $P_1=I-P_0$. Then, for
every $h\in H^2(\mathbb D^{n-1})$, by the Pythagorean theorem,
\begin{equation*}
||\Theta(z_1)h||^2=|f_1(z_1)|^2||P_0h||^2+||P_1h||^2=||P_0h||^2+||P_1h||^2=||h||^2.
\end{equation*}
This shows that $\Theta$ is an operator inner function. Further, we
have \[\Theta(z_1)H^2(\mathbb D^{n-1})=f_1P_0H^2(\mathbb
D^{n-1})+P_1H^2(\mathbb D^{n-1})=M\ominus z_1M,\] and hence
\[M=\bigoplus_{n=0}^\infty z_1^n(M\ominus z_1M)=\Theta(z_1)H^2(\mathbb D^{n}).\]
Conversely, suppose $\Theta(z_1)=f_1(z_1)P_0+P_1$, where $f_1$ is
inner, $P_1$ is a projection from $H^2(\mathbb D^{n-1})$ to an
invariant subspace generated by a single function and $P_0$ is a
complemented projection of $P_1$. Then
\begin{equation}\label{eq2}
M=\Theta(z_1)H^2(\mathbb D^{n})=f_1(H^2(\mathbb D)\otimes
P_0H^2(\mathbb D^{n-1}))\oplus (H^2(\mathbb D)\otimes P_1H^2(\mathbb
D^{n-1}))
\end{equation}
First, we show that for all $i=2,\ldots, n$, the multiplication
operators by $z_i$, $T_{z_i}$ and $P_1$ commute on $M$. Denote
$M_0=H^2(\mathbb D)\otimes P_0H^2(\mathbb D^{n-1})$ and
$M_1=H^2(\mathbb D)\otimes P_1H^2(\mathbb D^{n-1})$, and let
$P_{M_0}$ and $P_{M_1}$ stand for the projections from $H^2(\mathbb
D^n)$ to $M_0$ and $M_1$, respectively. Then, with respect to the
decomposition \eqref{eq2} we rewrite $T_{z_i}$, $i=2,\ldots,n$ on
$M$ as
\begin{equation*}
T_{z_i}=\left(%
\begin{array}{cc}
  P_{M_0}T_{z_i}P_{M_0} &  P_{M_0}T_{z_i}P_{M_1} \\
  P_{M_1}T_{z_i}P_{M_0} &  P_{M_1}T_{z_i}P_{M_1} \\
\end{array}%
\right),  i=2,\ldots, n
\end{equation*}
Since $M$ is invariant under $T_{z_i}$, $i=2,\ldots,n$, we have
\begin{equation*}
\left(%
\begin{array}{cc}
  P_{M_0}T_{z_i}P_{M_0} &  P_{M_0}T_{z_i}P_{M_1} \\
  P_{M_1}T_{z_i}P_{M_0} &  P_{M_1}T_{z_i}P_{M_1} \\
\end{array}%
\right)\left(%
\begin{array}{c}
  f_1M_0 \\
  M_1 \\
\end{array}%
\right)\subset \left(%
\begin{array}{c}
  f_1M_0 \\
  M_1 \\
\end{array}%
\right),
\end{equation*}
i.e.,
\begin{equation}\label{eq3}
\left(%
\begin{array}{c}
  f_1P_{M_0}T_{z_i}M_0 +  P_{M_0}T_{z_i}M_1 \\
  f_1P_{M_1}T_{z_i}M_0 +  P_{M_1}T_{z_i}M_1 \\
\end{array}%
\right)\subset \left(%
\begin{array}{c}
  f_1M_0 \\
  M_1 \\
\end{array}%
\right).
\end{equation}
Consider the first line in \eqref{eq3}. It is clear that
$f_1P_{M_0}T_{z_i}M_0\subset f_1M_0$, and hence
$P_{M_0}T_{z_i}M_1\subset f_1M_0$. It is easy computed that
$P_{M_0}T_{z_i}M_1=H^2(\mathbb D)\otimes P_0z_iP_1H^2(\mathbb
D^{n-1})$ and $f_1M_0=f_1H^2(\mathbb D)\otimes P_0H^2(\mathbb
D^{n-1})$. Therefore, since $f_1$ is non-trivial, the first
inclusion in \eqref{eq3} holds only if $P_0z_iP_1H^2(\mathbb
D^{n-1})=\{0\}$, $i=2,\ldots,n$. This implies $z_iP_1H^2(\mathbb
D^{n-1})\subset P_1H^2(\mathbb D^{n-1})$, $i=2,\ldots,n$, i.e.,
$P_1H^2(\mathbb D^{n-1})$ is invariant subspace of $H^2(\mathbb
D^{n-1})$ and by assumption it is generated by a single function
$f_2(z_2,\ldots,z_n)\in H^\infty(\mathbb D^{n-1})$ ,i.e.,
$P_1H^2(\mathbb D^{n-1})=f_2H^2(\mathbb D^{n-1})$. It follows from
Theorem \eqref{koca} that $f_2^{-1}\in L^\infty(\mathbb T^{n-1})$.
Finally, we have
\begin{equation*}
M=f_1(H^2(\mathbb D)\otimes P_0H^2(\mathbb D^{n-1}))\oplus
(H^2(\mathbb D)\otimes f_2H^2(\mathbb D^{n-1}))=f_1H^2(\mathbb
D^n)+f_2 H^2(\mathbb D^n).
\end{equation*}
\end{proof}

\section{Unitary Equivalence}
Two invariant subspaces $M_1$ and $M_2$ of $H^2(\mathbb D^n)$ are
said to be unitarily equivalent if there is a unitary operator
$U:M_1\rightarrow M_2$ such that $U(\varphi f)=\varphi(Uf)$ for
$\varphi\in H^\infty(\mathbb D^n)$ and $f\in M_1$. Agrawal, Clark
and Douglas \cite{Agrawal} study the question of unitary equivalence
of invariant subspaces of $H^2(\mathbb{D}^n)$. Specifically, unitary
equivalence of inner-based invariant subspaces and invariant
subspaces generated by two inner functions of $H^2(\mathbb D^2)$ are
studied by Seto \cite{Seto} and Yang \cite{Yang}, respectively. In
this section we determine unitary equivalence of sequence-based
invariant subspaces and invariant subspaces generated by two
functions of $H^2(\mathbb D^n)$, separately.

\begin{thm}
Let $M_1$ and $M_2$ denoted sequence-based invariant subspace of
$H^2(\mathbb D^n)$ corresponding to sequences
$\{f_l(z_1,\ldots,z_{n-1})\}_{l\geq 0}$ and
$\{g_l(z_1,\ldots,z_{n-1})\}_{l\geq 0}$, respectively. Then $M_1$
and $M_2$ unitarily equivalent if and only if there exists a
unimodular function $h(z_1,\ldots,z_{n-1})$ depending only variables
$z_1,\ldots,z_{n-1}$ such that $M_2=hM_1$.
\end{thm}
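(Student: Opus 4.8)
The plan is to treat the two implications separately, the sufficiency being routine and the necessity carrying the bulk of the work. For sufficiency, suppose $M_2 = hM_1$ for a unimodular $h = h(z_1,\ldots,z_{n-1})$. I would set $U = M_h|_{M_1}$, i.e.\ multiplication by $h$. Since $|h|=1$ a.e.\ on $\mathbb{T}^n$, $M_h$ is isometric on $L^2(\mathbb{T}^n)$, so $U$ is an isometry of $M_1$ onto $hM_1 = M_2$, hence unitary; and because multiplication operators commute, $U(\varphi f) = h\varphi f = \varphi(hf) = \varphi(Uf)$ for every $\varphi \in H^\infty(\mathbb{D}^n)$. Thus $U$ implements a unitary equivalence.

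For necessity, let $U\colon M_1 \to M_2$ be a unitary with $U(\varphi f) = \varphi(Uf)$. The first and crucial step is to show that $U$ is itself multiplication by a unimodular function. Using unitarity together with the module property, for all $\varphi,\psi \in H^\infty(\mathbb{D}^n)$ and all $f,g \in M_1$ one has $\langle \varphi Uf, \psi Ug\rangle = \langle U(\varphi f), U(\psi g)\rangle = \langle \varphi f, \psi g\rangle$, that is $\int_{\mathbb{T}^n}\varphi\overline\psi\,(Uf)\overline{Ug}\,dm = \int_{\mathbb{T}^n}\varphi\overline\psi\,f\overline g\,dm$. Taking $\varphi,\psi$ to be monomials, the products $\varphi\overline\psi$ run through all trigonometric monomials, whose span is dense in $C(\mathbb{T}^n)$; since both integrands lie in $L^1(\mathbb{T}^n)$, I conclude the pointwise boundary identity $(Uf)\overline{Ug} = f\overline g$ a.e.\ on $\mathbb{T}^n$ for all $f,g \in M_1$. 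Putting $g=f$ gives $|Uf| = |f|$ a.e.; fixing one nonzero $f$ and writing $h = Uf/f$ (well defined a.e., as boundary values of a nonzero $H^2$ function are nonzero a.e.), the identity forces $h_f\overline{h_g}=1$ for any two choices, whence $h$ is unimodular and independent of the chosen $f$, and $Uf = hf$ for every $f \in M_1$. Hence $U = M_h|_{M_1}$ and $M_2 = hM_1$.

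It remains to prove that $h$ does not depend on $z_n$, and here I would exploit condition (I) of Definition \ref{defn}: the function $f_0$ is a generalized inner function, so $|f_0| \geq \norm{f_0^{-1}}_\infty^{-1} > 0$ a.e., and likewise $g_0$. Expanding $h = \sum_{k\in\mathbb{Z}} h_k(z_1,\ldots,z_{n-1})\,z_n^k$ in its $z_n$-Fourier series and using that $f_0 \in M_1$ is the $l=0$ generator, one has $hf_0 \in M_2 \subseteq H^2(\mathbb{D}^n)$; comparing $z_n$-frequencies forces $h_k f_0 = 0$, hence $h_k = 0$, for all $k<0$, so $h$ has no negative $z_n$-frequencies. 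Applying the same reasoning to $U^{-1} = M_{\overline h}$ and the generator $g_0 \in M_2$, the relation $\overline h\,g_0 \in M_1 \subseteq H^2(\mathbb{D}^n)$ kills all positive $z_n$-frequencies of $h$. Therefore $h = h_0(z_1,\ldots,z_{n-1})$, unimodular on $\mathbb{T}^{n-1}$, with $M_2 = hM_1$, as required.

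The main obstacle is the first step of necessity, namely upgrading the abstract module isometry $U$ to an honest multiplication operator: because the subspaces $M_i$ need not be singly generated, one cannot simply read off $h$ from a cyclic vector, and the boundary integral identity together with the density of trigonometric monomials is precisely what circumvents this difficulty. Once $U = M_h$ is established, the separation of variables becomes a clean Fourier-frequency argument resting on the invertibility built into condition (I).
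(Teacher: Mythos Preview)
Your proof is correct and follows essentially the same route as the paper: the paper obtains the unimodular $h$ with $M_2=hM_1$ by citing Lemma~1 of Agrawal--Clark--Douglas, whereas you reprove that lemma directly via the boundary identity $(Uf)\overline{Ug}=f\overline g$, and thereafter both arguments use $hf_0\in H^2(\mathbb D^n)$ and $\overline h\,g_0\in H^2(\mathbb D^n)$ to show that $h$ has neither negative nor positive $z_n$-frequencies. One small remark: your parenthetical that a nonzero $H^2(\mathbb D^n)$ function has a.e.\ nonvanishing boundary values is not needed here, since you can simply take $f=f_0$, which by condition~(I) satisfies $|f_0|\ge \|f_0^{-1}\|_\infty^{-1}>0$ a.e.
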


\begin{proof}
If $M_1$ and $M_2$ are unitarily equivalent, there exists a
unimodular function $h(z_1,\ldots,z_n)$ such that $M_2=hM_1$ by
Lemma 1 in \cite{Agrawal}. Since $\overline{h}g_0$ and $hf_0$ are in
$H^2(\mathbb D^n)$, $h$ is $z_n$-analytic and conjugate
$z_n$-analytic. Hence $h$ depends only variables
$z_1,\ldots,z_{n-1}$. The converse is trivial.

%
\end{proof}

\begin{thm}
Let $f_1(z_1), g_1(z_1)$ be inner functions and
$f_2(z_2,\ldots,z_n), g_2(z_2,\ldots,z_n)$ be functions in
$H^\infty(\mathbb D^{n-1})$ with $f_2^{-1}, g_2^{-1} \in
L^\infty(\mathbb T^{n-1})$ and
\[M_1= f_1H^2(\mathbb D^n)+f_2H^2(\mathbb D^n),\;\; M_2= g_1H^2(\mathbb D^n)+g_2H^2(\mathbb D^n).\]
Then $M_1$ is unitarily equivalent to $M_2$ only if $M_1=M_2$.
\end{thm}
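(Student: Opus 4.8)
The plan is to exploit the result that unitary equivalence of invariant subspaces forces a rigid relationship through a unimodular multiplier. By Lemma~1 of \cite{Agrawal}, if $M_1$ is unitarily equivalent to $M_2$ then there exists a unimodular function $h(z_1,\ldots,z_n)$ such that $M_2=hM_1$. The strategy is to pin down the structure of $h$ by testing it against the generators of $M_1$ and $M_2$, and then to show that $h$ must in fact be a unimodular constant, which forces $M_1=M_2$.

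First I would record the consequences of $M_2=hM_1$ on generators. Since $f_1,f_2\in M_1$ we get $hf_1,hf_2\in M_2$; since $g_1,g_2\in M_2$ we get $\overline h g_1,\overline h g_2\in M_1$. In particular both $h f_1$ and $\overline h g_1$ lie in $H^2(\mathbb D^n)$, and because $f_1,g_1$ depend only on $z_1$ with $f_1^{-1},g_1^{-1}\in L^\infty(\mathbb T)$, an argument of the type used in the previous theorem shows $h$ is simultaneously analytic and conjugate-analytic in the variables $z_2,\ldots,z_n$, so $h$ depends only on $z_1$. Symmetrically, comparing $hf_2$ and $\overline h g_2$ (which involve only $z_2,\ldots,z_n$) shows $h$ is analytic and conjugate-analytic in $z_1$, so $h$ depends only on $z_2,\ldots,z_n$. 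Combining the two reductions, $h$ is independent of every variable, hence a unimodular constant.

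Once $h$ is a unimodular constant $c$, we have $M_2=cM_1=M_1$ since scaling by a nonzero constant does not change a subspace. This yields $M_1=M_2$, which is exactly the conclusion.

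The main obstacle is the joint analyticity/conjugate-analyticity extraction in the first reduction: one must argue carefully that membership of both $hf_1$ and $\overline h g_1$ in $H^2$, together with the invertibility $f_1^{-1},g_1^{-1}\in L^\infty(\mathbb T)$, actually forces the variable dependence to drop out rather than merely constraining the boundary values. The cleanest way is to pass to boundary values on $\mathbb T^n$, write $h^* = (hf_1)^*(f_1^*)^{-1}$ to see that the Fourier support of $h^*$ in the $z_2,\ldots,z_n$ variables is nonnegative, and use $\overline{h^*}=(\overline h g_1)^*\,\overline{(g_1^*)^{-1}}$ together with the fact that $g_1^{-1}$ is a bounded function of $z_1$ alone to see the support is also nonpositive; the two together force the support to be concentrated at the origin in those variables. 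Running the symmetric argument and invoking Lemma~1 of \cite{Agrawal} to produce $h$ in the first place completes the proof.
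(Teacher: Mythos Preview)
Your argument is correct and reaches the conclusion, but the route differs from the paper's. The paper invokes Lemma~2.1 of \cite{Yang}, which for invariant subspaces of this two-generator form produces an \emph{inner} function $\phi$ with $M_2=\phi M_1$; then $M_1=\bar\phi M_2$ forces $\bar\phi$ to be inner as well, and an analytic function whose conjugate is also analytic must be constant. You instead start from the weaker Lemma~1 of \cite{Agrawal}, which only gives a unimodular $h$, and then exploit the separated-variable structure of the generators: the pair $(hf_1,\bar h g_1)$ kills the dependence of $h$ on $z_2,\ldots,z_n$, and the pair $(hf_2,\bar h g_2)$ kills the dependence on $z_1$. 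The paper's proof is shorter because the heavy lifting is outsourced to \cite{Yang}; your proof is more self-contained, uses only the tool already employed in the preceding theorem, and makes transparent exactly where the hypotheses $f_1^{-1},g_1^{-1}\in L^\infty(\mathbb T)$ and $f_2^{-1},g_2^{-1}\in L^\infty(\mathbb T^{n-1})$ enter. One small slip: in your displayed identity for $\overline{h^*}$ you should have $(g_1^*)^{-1}$ rather than $\overline{(g_1^*)^{-1}}$, but since $g_1$ is inner both are bounded functions of $z_1$ alone and the Fourier-support conclusion is unaffected.
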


\begin{proof}
By Lemma 2.1 in \cite{Yang}, there is an inner function $\phi$ such
that $M_2=\phi M_1$. Since it implies $M_1=\bar\phi M_2$, $\bar\phi$
is also inner. Then $\phi$ is constant. This proves the theorem.
\end{proof}

\bibliographystyle{amsplain}

\end{document}